\documentclass[%
 aip,
 amsmath,amssymb,
 reprint,%
]{revtex4-1}

\usepackage{graphicx}
\graphicspath{{./figures/}}
\usepackage{dcolumn}
\usepackage{bm}
\usepackage[utf8]{inputenc}
\usepackage[T1]{fontenc}
\usepackage{mathptmx}
\usepackage{etoolbox}
\usepackage{amsthm}

\theoremstyle{plain}
\newtheorem{lemma}{Lemma}
\newtheorem{proposition}{Proposition}

\theoremstyle{definition}
\newtheorem{definition}{Definition}
\theoremstyle{plain}

\makeatletter
\def\@email#1#2{%
 \endgroup
 \patchcmd{\titleblock@produce}
  {\frontmatter@RRAPformat}
  {\frontmatter@RRAPformat{\produce@RRAP{*#1\href{mailto:#2}{#2}}}\frontmatter@RRAPformat}
 {}{}
}%
\makeatother

\newcommand{\What}{\widehat{W}}
\newcommand{\Wtrue}{W_{\mathrm{true}}}

\newcommand{\epsres}{\varepsilon_{\mathrm{res}}}
\newcommand{\epsmat}{\varepsilon_{\mathrm{mat}}}
\newcommand{\R}{\mathbb{R}}
\newcommand{\simplex}[1]{\Delta^{#1}}

\newcommand{\Epr}{\mathcal{E}_{p,r}}
\newcommand{\Fp}{\mathcal{F}_p}

\newcommand{\Mlift}{\mathcal{M}_p}

\begin{document}

\preprint{AIP/UNAH}

\title{Identifying Probability Localization Dynamics via Structured Stochastic Liftings}

\author{Fredy Vides}
\email{fredy.vides@unah.edu.hn}
\affiliation{
Department of Applied Mathematics, School of Mathematics and Computer Science, \\
Universidad Nacional Aut\'onoma de Honduras (UNAH)
}

\date{\today}

\begin{abstract}
This work develops a discrete-time framework for identifying probability
localization dynamics through finite stochastic representations adapted
jointly in space, time, memory, and state information. A compact
dynamically relevant set is localized by a finite measurable partition,
producing an observable probability state and a relational graph of
admissible transitions. Structured stochastic liftings derived from
Stochastically Structured Reservoir Computing (SSRC) give lossless
polynomial representations of the observable state, and stochastic delay
liftings add finite observable memory. These are distinguished from
\emph{dynamically informed state-space enrichment}: refinement of
observational fibers containing states with the same present observation
but different observable futures, yielding an exact
obstruction-to-closure criterion. Temporal coarse-graining is introduced
next, making the physical memory horizon explicit. A route-network toy
problem gives a minimal obstruction example, while four numerical
laboratories (rotational phase dynamics, the chaotic logistic map, the Van
der Pol oscillator, and a synthetic cyclic inventory system) show how
spatial scale, temporal scale, polynomial degree, and delay depth
interact. The logistic map isolates representation-induced memory in an
otherwise Markovian chaotic system, using its exact invariant law as an
ergodic benchmark and its zero-mass pseudospectrum to separate relaxation
from transient amplification. An exact rotational cycle calibrates
pseudospectra as a robustness diagnostic rather than a closure
certificate. The inventory example gives a closure-driven enrichment
procedure in which residence-age hazards trigger age-refined states that
improve predictive scores. These results motivate a \emph{minimal
adequate representation}: the least complex representation meeting
predictive, structural, and identifiability requirements.
\end{abstract}

\maketitle

\section{\label{sec:intro}Introduction}

Many dynamical systems are observed only through finite regions, categories,
or operational states. In that setting the useful state is often not a
pointwise vector in the original phase space, but the probability that the
system is localized in one of finitely many dynamically meaningful regions.
This motivates the study of \emph{probability localization dynamics}.

Let $X\subset\mathbb R^d$ be a compact dynamically relevant set and let
$\mathcal P_\delta=\{S_1,\ldots,S_m\}$ be a finite measurable localization
partition at spatial scale $\delta$. For a law $\mu_t$ on $X$, define
\begin{equation}
\Lambda_{\mathcal P_\delta}(\mu_t)
=
\begin{bmatrix}
\mu_t(S_1)&\cdots&\mu_t(S_m)
\end{bmatrix}^{\top}
=
p_t\in\Delta^{m-1}.
\label{eq:law-localization-intro}
\end{equation}
The vector $p_t$ may represent an ensemble distribution, an empirical
population, uncertainty in the initial condition or measurement, or a
stochastic law. For one deterministic trajectory with exact observation it
reduces to a one-hot state.

The central question is not only how to fit an update law for $p_t$, but how
to decide whether the chosen representation contains enough information for
the observable dynamics to close. This distinction is essential. A richer
polynomial feature map can increase nonlinear expressivity without adding
information. A delay window can add information distributed across
observable time. A refinement of the state space can add genuinely missing
mechanistic or exogenous information.

The SSRC architecture of Ref.~\onlinecite{ssrc2025ifac} is particularly
suited to this setting because it preserves stochasticity. It yields models
of the form
\begin{equation}
p_{t+1}
\approx
W\,\mathcal E_{p,r}(p_t,\ldots,p_{t-r}),
\end{equation}
where $W$ is column-stochastic and the lifted feature vector remains in a
simplex. When the linear block of the lifting is active, the observable state
or delay window is exactly recoverable from the lifted coordinates.

Four complementary ideas organize the framework. First, finite
localization geometry is treated explicitly at the law level and linked to a
relational graph. Second, instantaneous stochastic polynomial liftings are
separated from information-restoring state enrichment. Third, temporal
resolution $\alpha$ and delay depth $r$ are distinguished through the
physical memory horizon $r\alpha$. Fourth, state enrichment is made
diagnostic: residual structure, residence-age hazards, and candidate
exogenous variables can trigger local refinements of insufficient
observational fibers.

The resulting model family is naturally indexed by
\begin{equation}
(\delta,\alpha,p,r),
\label{eq:four-axes}
\end{equation}
together with graph support and optional state enrichment. The objective is
not maximal resolution along every axis, but a \emph{minimal adequate
representation} that meets a validation tolerance while controlling
complexity and finite-data conditioning.

The remainder of the paper is organized as follows. Section~\ref{sec:geometry} develops finite
localization geometry. Section~\ref{sec:ssrc} recalls structured stochastic
SSRC liftings. Sections~\ref{sec:prob-dynamics} and
\ref{sec:instant-lift} define the observable dynamics and instantaneous
liftings. Section~\ref{sec:enrichment} introduces dynamically informed
state-space enrichment. Section~\ref{sec:delay} treats observable memory and
Section~\ref{sec:temporal} temporal coarse-graining. Structured
identification and robustness are discussed in
Sections~\ref{sec:identification} and \ref{sec:spectral}. The numerical examples in Section~\ref{sec:example} use a compact
hierarchy: a route-network toy model illustrates the fiber obstruction,
rotational dynamics provide an exact phase and pseudospectral calibration,
the logistic map isolates representation-induced memory in a chaotic
discrete-time system, Van der Pol supplies a nonlinear spatial-temporal
localization study, and a synthetic inventory cycle provides the operational
enrichment example. Finally,
Section~\ref{sec:selection} develops adaptive representation selection and
Section~\ref{sec:discussion} summarizes the emerging theory.

\subsection{Related work and positioning}

The framework intersects several established lines of research, and it is
useful to state explicitly what is shared and what is different.

Memory induced by projection is the subject of the Mori--Zwanzig
formalism~\cite{zwanzig2001,chorin2000}: projecting a Markovian evolution
onto a reduced set of observables produces an exact reduced evolution law
with a memory kernel and a noise term. The present work shares the
diagnosis, since apparent memory in the examples arises from observation
rather than from the underlying state, but not the remedy. Instead of
modeling the memory kernel, the framework asks whether memory is
representational and, when it is, repairs the observation by targeted
state enrichment, or approximates the missing information by short
stochastic delay windows with explicit identifiability costs.

Koopman-operator methods and extended dynamic mode
decomposition~\cite{mezic2005,williams2015} also lift observables into
higher-dimensional spaces on which the dynamics acts approximately
linearly. The liftings used here differ in structure rather than in
spirit: the embeddings preserve the probability simplex exactly, the
readouts are constrained to be column-stochastic with graph support, and
the active linear block guarantees a left inverse, so the lifted
representation can never silently discard the observable state. General
EDMD dictionaries do not preserve stochasticity, and their coordinates
need not be recoverable.

The localization step itself is of Ulam type~\cite{ulam1960,dellnitz1999}:
a finite measurable partition induces a finite stochastic matrix. The
difference lies in what is estimated and how. Rather than discretizing a
known operator on a fixed partition, the framework identifies structured
stochastic representations from empirical probability trajectories and
treats partition geometry, temporal scale, polynomial degree, and delay
depth as jointly adaptable representation choices.

Markov state modeling in molecular dynamics faces the same non-Markovian
projection bias, and the approximation quality of aggregated Markov
models is well studied~\cite{sarich2010,deuflhard2005}. Markov state
models typically aggregate a fine simulated process into metastable
macrostates. Closure-driven enrichment proceeds in the opposite
direction: starting from a coarse observable representation, diagnostics
such as residence-age hazards or residual correlation with an exogenous
signal trigger local refinements, which also covers mechanistic and
exogenous enrichments that are not metastability driven.

Finally, the notion of a minimal adequate representation is a task-level,
finite-data counterpart of minimal predictive sufficient statistics in
computational mechanics~\cite{crutchfield1989,shalizi2001}. Causal states
are defined information-theoretically over the full past; a minimal
adequate representation is instead selected under explicit structural,
conditioning, and measurement-cost constraints, which is what makes it
operational for industrial and financial applications. On the
architectural side, SSRC itself descends from reservoir
computing~\cite{jaeger2004,ssrc2025ifac}, with the distinguishing feature
that stochastic structure is enforced during identification rather than
repaired afterwards.

\section{\label{sec:geometry}Finite localization geometry on a compact dynamical set}

Let $X\subset\mathbb R^d$ be compact. For any $\delta>0$, choose a finite
$\delta$-net $q_1,\ldots,q_m\in X$. A convenient deterministic partition is
obtained by nearest-representative assignment with fixed tie breaking:
\begin{equation}
S_j
=
\left\{
x\in X:
j=\min\operatorname*{arg\,min}_{1\le k\le m}
\|x-q_k\|
\right\}.
\label{eq:voronoi-partition}
\end{equation}
Then
\begin{equation}
X=\bigsqcup_{j=1}^{m}S_j,
\qquad
S_j\subset\overline B(q_j,\delta),
\qquad
\operatorname{diam}(S_j)\le 2\delta.
\end{equation}
The associated localization or quantization map is
\begin{equation}
Q_\delta(x)=q_j,
\qquad x\in S_j,
\end{equation}
with
\begin{equation}
\|x-Q_\delta(x)\|\le\delta.
\label{eq:quantization-error}
\end{equation}
No continuity of $Q_\delta$ is required.

\begin{definition}[Probability localization map]
For a probability law $\mu$ on $X$, define
\begin{equation}
\Lambda_{\mathcal P_\delta}(\mu)
=
\begin{bmatrix}
\mu(S_1)\\
\vdots\\
\mu(S_m)
\end{bmatrix}
\in\Delta^{m-1}.
\label{eq:law-map}
\end{equation}
\end{definition}

For deterministic dynamics $x_{t+1}=f(x_t)$,
\begin{equation}
p_{t+1}
=
\Lambda_{\mathcal P_\delta}(f_\#\mu_t),
\label{eq:pushforward-localization}
\end{equation}
where $f_\#$ denotes pushforward. More generally, $f_\#$ is replaced by the
law evolution induced by a stochastic or controlled system.

A structural graph $G_\delta=(V_\delta,E_\delta)$, with
$V_\delta=\{q_1,\ldots,q_m\}$, can be defined by
\begin{equation}
(i,j)\in E_\delta
\quad\Longleftrightarrow\quad
f(S_i)\cap S_j\neq\varnothing,
\label{eq:structural-edge}
\end{equation}
or by a positive transition probability for stochastic dynamics. With data,
one obtains an empirical graph by thresholding observed transition counts.
These structural and empirical graphs should be distinguished.

The graph is relational rather than necessarily physical: vertices may
represent geometric cells, phases, operational states, or regions of a
high-dimensional coupled system.

\subsection{Refinement and aggregation}

If $\mathcal P'$ refines $\mathcal P$, there is a deterministic
column-stochastic aggregation matrix $A$ such that
\begin{equation}
\Lambda_{\mathcal P}
=
A\Lambda_{\mathcal P'}.
\label{eq:partition-aggregation}
\end{equation}
Thus spatial enrichment by partition refinement has a canonical stochastic
projection back to the coarse representation.

\subsection{Exact law-level closure}

Let $\mathcal T$ denote the one-step evolution of laws and let
$\mathcal M$ be an admissible family of laws. The localization is exactly
closed on $\mathcal M$ if
\begin{equation}
\Lambda_{\mathcal P}(\mu)
=
\Lambda_{\mathcal P}(\nu)
\quad\Longrightarrow\quad
\Lambda_{\mathcal P}(\mathcal T\mu)
=
\Lambda_{\mathcal P}(\mathcal T\nu)
\label{eq:law-level-closure}
\end{equation}
for all $\mu,\nu\in\mathcal M$. This condition is the law-level version of
the fiber criterion developed later.

For an empirical ensemble of $N$ independent realizations,
\begin{equation}
\widehat p_j(t)
=
\frac1N\sum_{\ell=1}^{N}
\mathbf 1_{S_j}(x_t^{(\ell)}),
\end{equation}
and a direct union bound with Hoeffding's inequality~\cite{hoeffding1963} gives
\begin{equation}
\Pr\left(
\|\widehat p_t-p_t\|_\infty>\epsilon
\right)
\le
2m e^{-2N\epsilon^2}.
\label{eq:empirical-concentration}
\end{equation}
Hence spatial refinement improves geometric resolution but simultaneously
raises the dimension and sampling burden of the stochastic state.

\section{\label{sec:ssrc}Structured stochastic representations from SSRC}

The SSRC model of Ref.~\onlinecite{ssrc2025ifac} is recalled in the notation
needed below:
\begin{equation}
y(t)=W\,\eth_p(x(t))+e(t),
\label{eq:ssrc-model}
\end{equation}
where $x(t)$ and $y(t)$ are stochastic vectors, $W$ is
column-stochastic, and $e(t)$ is an identification residual.

A general unreduced stochastic $p$-embedding has the form
\begin{equation}
\widetilde{\eth}_{s,p}(x)
=
\frac{1}{m_s}
\begin{bmatrix}
s_1x\\
s_2x^{\otimes2}\\
\vdots\\
s_px^{\otimes p}\\
s_{p+1}
\end{bmatrix},
\qquad
s\in\{0,1\}^{p+1},
\label{eq:general-embedding}
\end{equation}
with
\begin{equation}
m_s=\sum_{j=1}^{p+1}s_j.
\end{equation}
The tensorial blocks preserve stochasticity because, for stochastic vectors
$x$ and $z$,
\begin{equation}
\mathbf{1}^{\top}(x\otimes z)
=
(\mathbf{1}^{\top}x)(\mathbf{1}^{\top}z)
=
1.
\end{equation}
Consequently every active tensor power $x^{\otimes k}$ is stochastic, and so
is the normalized block embedding.

Repeated words appear in the tensor powers. For example,
\begin{equation}
x_ix_j=x_jx_i
\end{equation}
although the two products occupy different coordinates in
$x^{\otimes2}$. Lemma~2 of Ref.~\onlinecite{ssrc2025ifac} introduces a sparse
matrix $R_{s,p}(n)$ that aggregates coordinates corresponding to the same
monomial word. The reduced embedding
\begin{equation}
\eth_{s,p,r}(x)
=
R_{s,p}(n)\widetilde{\eth}_{s,p}(x)
\label{eq:reduced-embedding}
\end{equation}
is stochastic and contains only non-redundant monomials.

The SSRC readout matrix is further restricted by a relational graph
$G_S=(V_S,E_S)$. If
\begin{equation}
B_S(m,n)
=
\{
e_{j,k}(m,n):(j,k)\in E_S
\},
\end{equation}
then the identified matrix satisfies
\begin{equation}
\What
\in
\bigl(\operatorname{span}B_S(m,n)\bigr)
\cap
\mathbb{S}_{m,n}(\R),
\label{eq:structured-W}
\end{equation}
and is obtained through the structured non-negative least-squares
procedure~\cite{lawson1974} of Ref.~\onlinecite{ssrc2025ifac}.

\subsection{\label{subsec:leftinverse}Recoverability of the observable state}

The feature of the SSRC embedding that is central here is not merely
stochasticity but recoverability.

Assume the linear block is active. Let $\alpha_1>0$ denote its coefficient
after normalization and reduction. Since degree-one monomials are distinct,
the reduction does not eliminate them. Therefore there exists a coordinate
selector $\Pi_1$ satisfying
\begin{equation}
\Pi_1\eth_p(x)=\alpha_1x.
\end{equation}
Define
\begin{equation}
\kappa_p(q)
=
\alpha_1^{-1}\Pi_1q.
\label{eq:kappa}
\end{equation}
Then
\begin{equation}
\kappa_p(\eth_p(x))=x
\label{eq:left-inverse}
\end{equation}
for every $x$ in the stochastic domain of the embedding.

\begin{proposition}[Injective stochastic lifting]
\label{prop:injective}
If the linear block of $\eth_p$ is active with nonzero coefficient, then
$\eth_p$ is injective on its stochastic domain and $\kappa_p$ is a left
inverse on $\operatorname{Im}(\eth_p)$.
\end{proposition}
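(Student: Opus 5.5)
The plan is to derive both claims directly from the left-inverse identity \eqref{eq:left-inverse}, once that identity is properly justified. The only real work is to confirm that the coordinate selector $\Pi_1$ exists after normalization and reduction, and that the coefficient it picks out is exactly $\alpha_1>0$. Both injectivity and the left-inverse property then follow formally.

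\emph{Step 1 (the linear block survives reduction).} In the unreduced embedding \eqref{eq:general-embedding}, the degree-one block contributes the coordinates $s_1x_i/m_s$, $i=1,\ldots,n$. Each of these is the monomial word $x_i$ of length one. No other coordinate carries the same word: words of different length are distinct monomials, and the constant block $s_{p+1}$ is the empty word. The reduction matrix $R_{s,p}(n)$ of Lemma~2 in Ref.~\onlinecite{ssrc2025ifac} only sums coordinates sharing the same word. Hence each degree-one coordinate is mapped unchanged to its own row of $\eth_{s,p,r}(x)$ as in \eqref{eq:reduced-embedding}. I will take $\Pi_1$ to be the $n\times N$ $0/1$ matrix selecting these $n$ rows in the order $i=1,\ldots,n$. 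Then $\Pi_1\eth_p(x)=\alpha_1x$ with $\alpha_1=s_1/m_s$, which is positive because the linear block is active, so $s_1=1$ and $m_s\ge1$. This is the step I expect to need the most care. Its content is simply the bookkeeping that $R_{s,p}(n)$ acts as the identity on words of length one, so I would state explicitly which property of $R_{s,p}(n)$ from Ref.~\onlinecite{ssrc2025ifac} is used: rows index distinct words, and entries are $1$ exactly where a tensor coordinate has that word.

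\emph{Step 2 (left inverse).} With $\kappa_p$ defined in \eqref{eq:kappa}, linearity gives
\[
\kappa_p(\eth_p(x))=\alpha_1^{-1}\Pi_1\eth_p(x)=\alpha_1^{-1}\alpha_1x=x
\]
for every stochastic $x$. This is \eqref{eq:left-inverse}. It says precisely that $\kappa_p\circ\eth_p$ is the identity on the stochastic domain, so $\kappa_p$ restricted to $\operatorname{Im}(\eth_p)$ is a left inverse of $\eth_p$.

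\emph{Step 3 (injectivity).} Suppose $\eth_p(x)=\eth_p(z)$ for stochastic $x,z$. Applying $\kappa_p$ to both sides and using Step~2 gives $x=\kappa_p(\eth_p(x))=\kappa_p(\eth_p(z))=z$. Thus any map admitting a left inverse is injective, and the proposition follows. I will remark that nothing beyond $\alpha_1\neq0$ was used, which matches the hypothesis ``nonzero coefficient''. I will also note that $\kappa_p$ is in fact a linear map defined on the whole ambient space, not just on $\operatorname{Im}(\eth_p)$.
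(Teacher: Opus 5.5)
Your proposal is correct and follows the paper's argument. Like you, the paper obtains $\Pi_1\eth_p(x)=\alpha_1x$ because degree-one monomials are not merged by the reduction, sets $\kappa_p=\alpha_1^{-1}\Pi_1$, and proves injectivity by applying $\kappa_p$ to both sides of $\eth_p(x)=\eth_p(y)$; your Step~1 just spells out the reduction bookkeeping and the value $\alpha_1=s_1/m_s$ for the embedding in Eq.~\eqref{eq:general-embedding}, which the paper leaves implicit.
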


\begin{proof}
If $\eth_p(x)=\eth_p(y)$, then applying $\kappa_p$ to both sides and using
Eq.~\eqref{eq:left-inverse} gives $x=y$.
\end{proof}

Thus the lifted state preserves the observable probability state exactly.

\section{\label{sec:prob-dynamics}Probability localization as a discrete-time dynamical system}

Let $G_X=(V_X,E_X)$ be a graph whose vertices represent observable regions,
classes, or relational states of a dynamical system. A probability
localization state is
\begin{equation}
p_t[i]
=
\Pr\{
\text{the system at time }t
\text{ is related to vertex }i
\}.
\label{eq:localization}
\end{equation}
The empirical version is obtained from a population or ensemble of observed
realizations.

The probability dynamics is written abstractly as
\begin{equation}
p_{t+1}
=
F(p_t)
\label{eq:prob-map}
\end{equation}
when the current probability vector is dynamically sufficient. The map $F$
need not be linear.

The simplest case is the stochastic linear model
\begin{equation}
F(p)=Wp,
\qquad
W\in\mathbb{S}_{n,n}(\R).
\label{eq:markov-case}
\end{equation}
This includes the usual finite-state Markov description and the matrices
obtained from finite state-space partitions. Classical transfer-matrix or
Ulam interpretations~\cite{ulam1960} can be attached to Eq.~\eqref{eq:markov-case} when
appropriate, but they are not required for the framework developed here.

The central issue is instead whether the chosen observable state closes the
dynamics. If no function $F$ exists such that Eq.~\eqref{eq:prob-map} holds
for all admissible hidden states compatible with the same $p_t$, then the
observable representation is dynamically insufficient.

\section{\label{sec:instant-lift}Instantaneous structured stochastic liftings}

Assume first that the observable probability vector is dynamically
sufficient. Consider an SSRC model
\begin{equation}
p_{t+1}
=
\What\eth_p(p_t).
\label{eq:ssrc-prob-map}
\end{equation}
Define
\begin{equation}
q_t
=
\eth_p(p_t),
\qquad
q_t\in
\Mlift
:=
\eth_p(\simplex{n-1}).
\label{eq:q-lift}
\end{equation}

The image $\Mlift$ is generally a proper nonlinear subset of the ambient
simplex. For a quadratic embedding, for example, its coordinates satisfy
algebraic relations induced by products $p_ip_j$.

The identified probability update on the original simplex is
\begin{equation}
\widehat F_p
=
\What\circ\eth_p.
\label{eq:Fhat}
\end{equation}
The corresponding closed lifted dynamics is obtained by re-embedding the
output:
\begin{equation}
q_{t+1}
=
\Fp(q_t),
\qquad
\Fp
:=
\eth_p\circ\What.
\label{eq:lifted-dynamics}
\end{equation}
For $p>1$, $\Fp$ is generally nonlinear.

\begin{proposition}[Conjugate lifted representation]
\label{prop:conjugacy}
Assume $\eth_p$ has the left inverse $\kappa_p$ of
Eq.~\eqref{eq:kappa}. Then, on $\Mlift$,
\begin{equation}
\Fp\circ\eth_p
=
\eth_p\circ\widehat F_p
\label{eq:commuting}
\end{equation}
and
\begin{equation}
\widehat F_p
=
\kappa_p\circ\Fp\circ\eth_p.
\label{eq:recover-F}
\end{equation}
Hence the identified probability dynamics and its lifted dynamics are
conjugate through the embedding restricted to its image.
\end{proposition}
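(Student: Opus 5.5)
The plan is to verify the two identities by direct composition. The only subtlety is domain bookkeeping: the maps are stated ``on $\Mlift$'', but the compositions $\Fp\circ\eth_p$ and $\eth_p\circ\widehat F_p$ naturally take arguments in $\simplex{n-1}$.

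\emph{Domains.} I first check that every composition is well defined. By Eq.~\eqref{eq:structured-W}, $\What$ is column-stochastic, and $\eth_p(x)$ is a stochastic vector for every $x\in\simplex{n-1}$, since the normalized tensor blocks and the reduction $R_{s,p}(n)$ preserve stochasticity. Hence $\What\eth_p(x)\in\simplex{n-1}$, so $\widehat F_p$ maps $\simplex{n-1}$ into itself. It follows that $\eth_p\circ\What$ is defined on $\Mlift=\eth_p(\simplex{n-1})$ and takes values in $\Mlift$. I will read ``on $\Mlift$'' in Eq.~\eqref{eq:commuting} as the statement that the two sides agree for every $x\in\simplex{n-1}$, equivalently at every point $q=\eth_p(x)$ of $\Mlift$. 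Because $\eth_p$ is injective by Proposition~\ref{prop:injective}, this point $x$ is uniquely determined by $q$, so the reading is unambiguous.

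\emph{Commuting relation.} Fix $x\in\simplex{n-1}$. Unfolding the definitions~\eqref{eq:Fhat} and~\eqref{eq:lifted-dynamics} gives
\[
\Fp(\eth_p(x))=\eth_p\bigl(\What\,\eth_p(x)\bigr)=\eth_p\bigl(\widehat F_p(x)\bigr).
\]
This is Eq.~\eqref{eq:commuting}; it is nothing more than associativity of composition.

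\emph{Recovery formula.} I apply $\kappa_p$ to both sides of the commuting relation. Since $\widehat F_p(x)\in\simplex{n-1}$ by the domain step, the left-inverse identity~\eqref{eq:left-inverse} applies at the point $\widehat F_p(x)$ and gives
\[
\kappa_p\bigl(\eth_p(\widehat F_p(x))\bigr)=\widehat F_p(x).
\]
Combined with the commuting relation, this yields $\kappa_p\circ\Fp\circ\eth_p=\widehat F_p$, which is Eq.~\eqref{eq:recover-F}.

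\emph{Conjugacy.} By Proposition~\ref{prop:injective}, $\eth_p\colon\simplex{n-1}\to\Mlift$ is a bijection, and its inverse is $\kappa_p|_{\Mlift}$. Equation~\eqref{eq:commuting} then reads $\Fp|_{\Mlift}=\eth_p\circ\widehat F_p\circ\kappa_p|_{\Mlift}$, which is exactly the statement that the two dynamics are conjugate through the embedding restricted to its image.

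\emph{Main obstacle.} The only nontrivial step is the domain step: confirming that $\widehat F_p(x)$ lies in the stochastic domain where Eq.~\eqref{eq:left-inverse} is valid. This is what lets the re-embedding close on $\Mlift$. I will justify it from column-stochasticity of $\What$ together with stochasticity of the embedding; everything else is formal.
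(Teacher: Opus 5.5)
Your proposal is correct and follows the paper's proof. You unfold $\Fp(\eth_p(x))=\eth_p(\What\eth_p(x))=\eth_p(\widehat F_p(x))$ for $x\in\simplex{n-1}$ and then apply $\kappa_p$; your checks that $\widehat F_p(x)\in\simplex{n-1}$ (by column-stochasticity of $\What$) and that $\kappa_p|_{\Mlift}$ inverts $\eth_p$ simply make explicit what the paper's two-line argument uses implicitly.
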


\begin{proof}
For any $p\in\simplex{n-1}$,
\begin{equation}
\Fp(\eth_p(p))
=
\eth_p(\What\eth_p(p))
=
\eth_p(\widehat F_p(p)).
\end{equation}
Applying $\kappa_p$ yields Eq.~\eqref{eq:recover-F}.
\end{proof}

This yields a basic topological distinction within the framework: instantaneous
SSRC lifting is an injective change of representation, not a many-to-one
coarse observation.

\subsection{Quadratic lifting}

A useful example is
\begin{equation}
\eth_2(p)
=
\begin{bmatrix}
\alpha p\\
(1-\alpha)(p\otimes p)
\end{bmatrix},
\qquad
0<\alpha<1,
\label{eq:eth2}
\end{equation}
for which
\begin{equation}
\kappa_2(q)
=
\alpha^{-1}q_{1:n}.
\end{equation}
If
\begin{equation}
W_{\mathrm{emb}}
=
\begin{bmatrix}
W_1 & W_2
\end{bmatrix},
\end{equation}
then
\begin{equation}
p_{t+1}
=
\alpha W_1p_t
+
(1-\alpha)W_2(p_t\otimes p_t).
\label{eq:quadratic-dynamics}
\end{equation}
Thus a linear stochastic readout in the lifted coordinates induces a
nonlinear probability dynamics on the original simplex.

\section{\label{sec:enrichment}Dynamically informed state-space enrichment}

An instantaneous lifting cannot create information that is absent from the
observable state. This motivates a separate operation: refinement of the
state representation itself.

Let
\begin{equation}
\rho_{t+1}=G(\rho_t),
\qquad
p_t=H\rho_t,
\label{eq:fine-observation}
\end{equation}
where $\rho_t$ is an enriched state and $H$ is a stochastic observation or
aggregation map. The observational fiber through $\rho$ is
\begin{equation}
[\rho]_H
=
\{\rho':H\rho'=H\rho\}.
\end{equation}

\begin{proposition}[Obstruction to instantaneous closure]
\label{prop:no-closure}
If there exist $\rho^{(1)}$ and $\rho^{(2)}$ such that
\begin{equation}
H\rho^{(1)}=H\rho^{(2)}
\end{equation}
but
\begin{equation}
HG(\rho^{(1)})\neq HG(\rho^{(2)}),
\end{equation}
then there is no deterministic map
$F$ on the coarse observable state satisfying
\begin{equation}
HG=FH
\end{equation}
globally.
\end{proposition}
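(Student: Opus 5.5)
The argument will be by contradiction, and it needs nothing beyond the definition of a function: a deterministic map sends equal inputs to equal outputs. Suppose some map $F$ on the coarse observable state satisfies $HG=FH$ globally, meaning that
\begin{equation}
HG(\rho)=F(H\rho)
\end{equation}
for every admissible enriched state $\rho$ in the domain of $G$. I will evaluate this identity at the two witnesses $\rho^{(1)}$ and $\rho^{(2)}$ supplied by the hypothesis.

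The steps are as follows. First, apply the identity at $\rho^{(1)}$ and at $\rho^{(2)}$, which gives $HG(\rho^{(1)})=F(H\rho^{(1)})$ and $HG(\rho^{(2)})=F(H\rho^{(2)})$. Second, use $H\rho^{(1)}=H\rho^{(2)}$; equivalently, $\rho^{(2)}\in[\rho^{(1)}]_H$, so both points lie in the same observational fiber. Since $F$ is single-valued, the two right-hand sides coincide. Third, this forces $HG(\rho^{(1)})=HG(\rho^{(2)})$, which contradicts the hypothesis. Hence no such $F$ exists.

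No genuine obstacle is expected. The only point needing care is the meaning of ``globally'': the conclusion holds as long as the identity $HG=FH$ is required on a set containing both witnesses. It places no restriction on $F$ beyond being a function, so linearity, stochasticity, and continuity are all irrelevant.

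I will close with two remarks. First, the proposition is the converse direction of the fiber criterion: $F$ exists if and only if $HG$ is constant on every fiber $[\rho]_H$, and in that case $F$ is well defined by $F(p)=HG(\rho)$ for any $\rho$ with $H\rho=p$. Second, by Proposition~\ref{prop:conjugacy}, replacing $p$ by an injective lifting $\eth_p(p)$ does not change the fibers. So no instantaneous SSRC lifting can remove the obstruction, and what is needed is additional information, through enrichment or delays.
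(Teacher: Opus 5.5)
Your proof is correct and is the same argument as the paper's: evaluate $HG=FH$ at the two witnesses, use $H\rho^{(1)}=H\rho^{(2)}$ and the single-valuedness of $F$ to force $HG(\rho^{(1)})=HG(\rho^{(2)})$, which contradicts the hypothesis. Your closing remark about liftings also matches the paper's follow-up observation, although the fact it relies on is the injectivity of $\eth_p$ (Proposition~\ref{prop:injective}) rather than the conjugacy statement.
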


\begin{proof}
If $HG=FH$, then
$HG(\rho^{(1)})=F(H\rho^{(1)})=F(H\rho^{(2)})=HG(\rho^{(2)})$, a
contradiction.
\end{proof}

This criterion separates \emph{representational richness} from
\emph{informational sufficiency}. If $\eth_p$ is injective on the simplex,
then
\begin{equation}
H\rho^{(1)}=H\rho^{(2)}
\quad\Longrightarrow\quad
\eth_p(H\rho^{(1)})=\eth_p(H\rho^{(2)}).
\end{equation}
Increasing polynomial degree therefore cannot repair an information-losing
observation by itself.

\subsection{Enrichment as fiber refinement}

Let $\xi=\Xi(\rho)$ be an additional state variable and define
\begin{equation}
\widetilde H(\rho)
=
\big(H\rho,\Xi(\rho)\big).
\label{eq:enriched-observation}
\end{equation}
This construction is termed a \emph{dynamically informed state-space
enrichment} when $\widetilde H$ refines the original observational fibers
and reduces the variation of future observables within them.

For a prediction horizon $K$, the enrichment is $K$-step predictively
sufficient on a set $\mathcal A$ if
\begin{equation}
\widetilde H(\rho^{(1)})
=
\widetilde H(\rho^{(2)})
\end{equation}
implies
\begin{equation}
HG^k(\rho^{(1)})
=
HG^k(\rho^{(2)}),
\qquad
k=1,\ldots,K,
\label{eq:K-sufficiency}
\end{equation}
for $\rho^{(1)},\rho^{(2)}\in\mathcal A$. Exact reconstruction of the full
fine state is not required; only predictive distinctions need to be
retained.

Several enrichments used in this work fit this definition:
\begin{enumerate}
\item \emph{spatial refinement}, where a coarse localization cell is split;
\item \emph{phase enrichment}, where projected phases are separated;
\item \emph{transition-progress enrichment}, such as pending route edges or
order-transit age;
\item \emph{residence-age enrichment}, where elapsed time in a coarse state
is included;
\item \emph{exogenous-state enrichment}, where a forcing variable or its
phase is appended.
\end{enumerate}

\subsection{Closure-driven enrichment diagnostics}

The enrichment need not be specified entirely in advance. Let
\begin{equation}
e_t
=
p_{t+1}-\widehat F(\mathcal R_t)
\end{equation}
be the residual under the current representation $\mathcal R_t$.
Structured dependence of $e_t$ on candidate variables suggests directions
for refinement. Dependence on past observations suggests delay lifting;
dependence on an observed exogenous signal suggests appending that signal;
localized residual structure suggests spatial refinement.

A particularly useful trigger is residence-age dependence~\cite{kalbfleisch2002}. For a coarse
state $j$, define
\begin{equation}
h_j(a)
=
\Pr(s_{t+1}\neq j\mid s_t=j,A_t=a),
\label{eq:hazard}
\end{equation}
where $A_t$ is elapsed residence age. Geometric residence gives an
approximately constant hazard. Age dependence is quantified by
\begin{equation}
D_{\rm age}(j)
=
\frac{
\sum_a n_{j,a}
\left(
\widehat h_j(a)-\overline h_j
\right)^2
}{
\sum_a n_{j,a}
},
\label{eq:hazard-score}
\end{equation}
where $n_{j,a}$ is the number at risk. A constant-hazard null model can be
used to calibrate a data-driven trigger. Flagged states are then refined only
locally, for example
\begin{equation}
j
\longrightarrow
(j,a=1),(j,a=2),\ldots.
\end{equation}
The refinement is retained only if it improves held-out predictive closure
enough to justify the added complexity.

The resulting closure-driven enrichment loop is
\begin{equation}
\begin{aligned}
\text{fit}&\to\text{diagnose}\to\text{propose refinement}\\
&\to\text{validate}\to\text{accept/reject}.
\end{aligned}
\label{eq:enrichment-loop}
\end{equation}
The statistical procedure detects hidden progress; domain knowledge supplies
its mechanistic interpretation.

\section{\label{sec:delay}Structured stochastic liftings with finite memory}

Between instantaneous lifting and direct state-space enrichment lies a third
possibility: reconstruct missing predictive information from a finite
history of observable probability vectors, in the same reconstructive
spirit as delay-coordinate embedding of a deterministic
trajectory~\cite{takens1981}, but applied here to probability vectors
rather than to a single scalar observable.

Let
\begin{equation}
p_t,p_{t-1},\ldots,p_{t-r}\in\Delta^{d-1}
\end{equation}
and choose
\begin{equation}
v=(v_0,\ldots,v_r)^\top\in\Delta^r.
\end{equation}
Define
\begin{equation}
\Psi_{v,r}(p_t,\ldots,p_{t-r})
=
\begin{bmatrix}
v_0p_t\\
v_1p_{t-1}\\
\vdots\\
v_rp_{t-r}
\end{bmatrix}
\in\Delta^{d(r+1)-1}.
\label{eq:delay-block}
\end{equation}
The polynomial stochastic delay lifting is
\begin{equation}
\Epr
=
R_{p,r}\eth_p\circ\Psi_{v,r}.
\label{eq:Epr}
\end{equation}

\begin{proposition}[Recoverable stochastic delay lifting]
\label{prop:delay-left-inverse}
If the linear block of $\eth_p$ is active and $v_\ell>0$ for all $\ell$,
there exists a left inverse $\kappa_{p,r}$ on
$\operatorname{Im}(\Epr)$ such that
\begin{equation}
\kappa_{p,r}
\left(
\Epr(p_t,\ldots,p_{t-r})
\right)
=
(p_t,\ldots,p_{t-r}).
\end{equation}
\end{proposition}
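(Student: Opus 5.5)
The plan is to compose two left inverses: the one for the polynomial lifting, supplied by Proposition~\ref{prop:injective}, and an explicit one for the delay block $\Psi_{v,r}$.

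\textbf{Step 1: the delay block is a stochastic vector.} Since each $p_{t-\ell}$ is stochastic and $v\in\Delta^r$,
\begin{equation}
\mathbf 1^\top\Psi_{v,r}(p_t,\ldots,p_{t-r})=\sum_{\ell=0}^r v_\ell=1 .
\end{equation}
So $z:=\Psi_{v,r}(p_t,\ldots,p_{t-r})\in\Delta^{d(r+1)-1}$ lies in the stochastic domain of $\eth_p$, with $n=d(r+1)$.

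\textbf{Step 2: recover $z$ from the reduced lifted vector.} Because the linear block of $\eth_p$ is active, the argument of Subsection~\ref{subsec:leftinverse} applies. The reduction $R_{p,r}$ only aggregates coordinates carrying the same monomial word, and degree-one monomials $z_i$ are pairwise distinct words, so they are never merged. Hence there is a coordinate selector $\Pi_1$ with
\begin{equation}
\Pi_1\Epr(p_t,\ldots,p_{t-r})=\alpha_1 z,\qquad \alpha_1>0 .
\end{equation}
Setting $\kappa_p(q)=\alpha_1^{-1}\Pi_1 q$ gives $\kappa_p(\Epr(\cdot))=z$, exactly as in Eq.~\eqref{eq:left-inverse}.

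\textbf{Step 3: invert $\Psi_{v,r}$ block by block.} Let $D_v=\operatorname{diag}(v_0^{-1}I_d,\ldots,v_r^{-1}I_d)$, which is well defined because every $v_\ell>0$. Then $D_v z=(p_t,\ldots,p_{t-r})$, reshaped blockwise. Define
\begin{equation}
\kappa_{p,r}=D_v\,\alpha_1^{-1}\Pi_1 .
\end{equation}
This is a linear map defined on all of the ambient space, and therefore in particular on $\operatorname{Im}(\Epr)$. It satisfies the identity claimed in the statement.

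\textbf{Main obstacle.} The only delicate step is Step~2. One must make sure that the reduction matrix $R_{p,r}$ acting on the $d(r+1)$-dimensional stacked vector does not merge linear coordinates, and that the normalization constant $\alpha_1$ is the same for all linear coordinates. I would handle this by citing the structure of $R_{s,p}(n)$ from Lemma~2 of Ref.~\onlinecite{ssrc2025ifac}: it sums only columns indexed by permutations of the same word, and a degree-one word has a single index.

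The positivity $v_\ell>0$ is genuinely needed. If some $v_\ell=0$, the block $p_{t-\ell}$ is annihilated and cannot be recovered, so the hypothesis is sharp.
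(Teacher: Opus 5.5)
Your proof is correct and takes the same route as the paper. The paper's one-line argument, spelled out in the appendix, selects the reduced first-order coordinates $\alpha_1 v_\ell p_{t-\ell}$ and rescales each delay block, which is exactly your map $\kappa_{p,r}=D_v\,\alpha_1^{-1}\Pi_1$; your Step~1 (stochasticity of $\Psi_{v,r}$) and your check that the reduction never merges degree-one words make explicit what the paper leaves implicit.
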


The proof is immediate by selecting the first-order coordinates and
rescaling the delay blocks.

An identified update is
\begin{equation}
p_{t+1}
=
\widehat W_{p,r}
\Epr(p_t,\ldots,p_{t-r}).
\label{eq:delay-readout}
\end{equation}
The resulting closed lifted dynamics is obtained by reconstructing the
history, applying the readout, shifting the window, and lifting again.

Polynomial degree and delay depth have distinct roles:
\begin{equation}
p
\quad\text{enriches the instantaneous representation,}
\end{equation}
while
\begin{equation}
r
\quad\text{adds observable history.}
\end{equation}
They may interact, but they are not interchangeable. If two hidden states
have the same instantaneous observation, no injective instantaneous lifting
can separate them. A short delay may supply the missing distinction, after
which a nonlinear lifting can represent the resulting history more
efficiently.

The distinction is visible in the nonlinear oscillator laboratory: a
quadratic model with one delay outperforms all tested linear models through
$r=4$, but a sufficiently long linear history eventually performs better.
The observation supports memory compression by nonlinear richness, not
replacement of missing information by polynomial degree.

\section{\label{sec:temporal}Temporal coarse-graining and physical memory horizon}

Spatial resolution is only one discretization scale. Let $\alpha\ge1$ be an
integer temporal sampling factor and define
\begin{equation}
p^{(\alpha)}_k
=
p_{k\alpha}.
\label{eq:temporal-coarse}
\end{equation}
For an exact linear Markov model $p_{t+1}=Wp_t$,
\begin{equation}
p^{(\alpha)}_{k+1}
=
W^\alpha p^{(\alpha)}_k.
\end{equation}
The effective graph support must therefore be rebuilt at each temporal scale
using $\operatorname{supp}(W^\alpha)$ or empirical $\alpha$-step
transitions.

The parameters $\alpha$ and $r$ encode different quantities:
\begin{align}
\alpha&=\text{sampling interval},\\
r&=\text{number of remembered observations}.
\end{align}
and the physical memory horizon is
\begin{equation}
L_{\rm mem}=r\alpha.
\label{eq:physical-memory}
\end{equation}
This prevents a comparison such as $r=4$ at daily sampling and $r=4$ at
weekly sampling from being interpreted as equal memory.

Residence statistics provide candidate temporal scales. If
$\tau_{\rm dwell}$ denotes a residence episode, useful $\alpha$ values can
be drawn from its median, mean, quantiles, or from state-persistence
functions. However, large $\alpha$ can skip dynamically relevant
intermediate regions. A direct empirical diagnostic is the microscopic
crossing count
\begin{equation}
C_\alpha(t)
=
\sum_{k=0}^{\alpha-1}
\mathbf 1\{s_{t+k+1}\neq s_{t+k}\}.
\label{eq:crossing-count}
\end{equation}
Then
\begin{equation}
\Pr(C_\alpha=0)
\end{equation}
measures temporal redundancy and
\begin{equation}
\Pr(C_\alpha\ge2)
\end{equation}
measures multi-transition skipping. A practical temporal scale should
balance the two.

The Van der Pol experiments show that residence times vary with spatial
resolution, so $\alpha$ should be selected conditionally on $\delta$. This
motivates a coupled spatial-temporal search rather than independent tuning.

\section{\label{sec:identification}Structured identification and finite-data uncertainty}

All representations considered above can be written in a common form.
Let
\begin{equation}
\Phi_t
\end{equation}
denote either the original probability vector, an instantaneous lifting, or
a reduced delay lifting:
\begin{equation}
\Phi_t
\in
\left\{
p_t,\;
\eth_p(p_t),\;
\Epr(p_t,\ldots,p_{t-r})
\right\}.
\end{equation}
Construct
\begin{equation}
Y=
\begin{bmatrix}
\Phi_0 & \cdots & \Phi_{T-1}
\end{bmatrix},
\qquad
Y'=
\begin{bmatrix}
p_1 & \cdots & p_T
\end{bmatrix}.
\end{equation}
The SSRC estimator solves
\begin{equation}
\What
=
\arg\min_{W\in\mathcal{S}}
\|WY-Y'\|_F,
\label{eq:identification}
\end{equation}
where $\mathcal{S}$ encodes non-negativity, column-stochasticity, and any
graph-induced support constraints.

Define
\begin{equation}
\epsres
=
\|\What Y-Y'\|_F.
\label{eq:residual}
\end{equation}

For a representation-level model
\begin{equation}
Y'
=
\Wtrue Y+E,
\label{eq:data-model}
\end{equation}
the following finite-data estimate is obtained.

\begin{lemma}[Residual-to-matrix perturbation bound]
\label{lem:matrix-bound}
Suppose $Y$ has full row rank. Then
\begin{equation}
\|\What-\Wtrue\|_2
\leq
\frac{
\|\What Y-Y'\|_2+\|E\|_2
}{
\sigma_{\min}(Y)
}
\leq
\frac{
\epsres+\|E\|_2
}{
\sigma_{\min}(Y)
}.
\label{eq:matrix-bound}
\end{equation}
In the noise-free case $E=0$,
\begin{equation}
\|\What-\Wtrue\|_2
\leq
\epsmat
:=
\frac{\epsres}{\sigma_{\min}(Y)}.
\label{eq:epsmat}
\end{equation}
\end{lemma}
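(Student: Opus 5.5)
The plan is to write the matrix error as a product with $Y$ and then use full row rank of $Y$ to undo the multiplication by $Y$, at the cost of a factor $1/\sigma_{\min}(Y)$. First form $\Delta=\What-\Wtrue$ and compute $\Delta Y$ from the data model Eq.~\eqref{eq:data-model}:
\begin{equation}
\Delta Y=\What Y-\Wtrue Y=(\What Y-Y')+(Y'-\Wtrue Y)=(\What Y-Y')+E .
\end{equation}
The triangle inequality in the spectral norm then gives $\|\Delta Y\|_2\le\|\What Y-Y'\|_2+\|E\|_2$.

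Next, lower-bound $\|\Delta Y\|_2$ by $\|\Delta\|_2\,\sigma_{\min}(Y)$. Since $Y$ is $N\times T$ with full row rank $N$, every $u\in\R^N$ satisfies $\|Y^\top u\|_2\ge\sigma_{\min}(Y)\|u\|_2$, where $\sigma_{\min}(Y)=\sigma_N(Y)>0$. Choose a unit vector $u$ with $\|\Delta^\top u\|_2=\|\Delta\|_2$. Then
\begin{equation}
\|\Delta Y\|_2=\|Y^\top\Delta^\top\|_2\ge\|Y^\top\Delta^\top u\|_2\ge\sigma_{\min}(Y)\|\Delta\|_2 .
\end{equation}
An equivalent route uses the right inverse $Y^+=Y^\top(YY^\top)^{-1}$, which satisfies $YY^+=I$ and $\|Y^+\|_2=1/\sigma_{\min}(Y)$, so that $\Delta=(\Delta Y)Y^+$. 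Dividing by $\sigma_{\min}(Y)$ yields the first inequality in Eq.~\eqref{eq:matrix-bound}.

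For the second inequality, use $\|\cdot\|_2\le\|\cdot\|_F$ together with the definition Eq.~\eqref{eq:residual}, which gives $\|\What Y-Y'\|_2\le\epsres$. In the noise-free case $E=0$, both inequalities collapse to Eq.~\eqref{eq:epsmat}.

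The only step needing care is the singular-value lower bound. It holds because $Y$ is being multiplied from the left, so the relevant singular values are those of $Y^\top$ acting on $\R^N$. These are all at least $\sigma_N(Y)$ precisely when $Y$ has full row rank; this is where that hypothesis is used.
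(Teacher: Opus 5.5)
Your proof is correct and is essentially the paper's argument. You use the same identity $(\What-\Wtrue)Y=(\What Y-Y')+E$ and then undo $Y$ through its right (Moore--Penrose) inverse with $\|Y^\dagger\|_2=\sigma_{\min}(Y)^{-1}$; your direct bound $\|(\What-\Wtrue)Y\|_2\ge\sigma_{\min}(Y)\|\What-\Wtrue\|_2$ is an equivalent restatement of that step, and you also make explicit the estimate $\|\What Y-Y'\|_2\le\|\What Y-Y'\|_F=\epsres$ that the paper leaves implicit for the second inequality.
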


\begin{proof}
From Eq.~\eqref{eq:data-model},
\begin{equation}
(\What-\Wtrue)Y
=
(\What Y-Y')+E.
\end{equation}
Since $Y$ has full row rank,
\begin{equation}
\What-\Wtrue
=
\left[
(\What Y-Y')+E
\right]Y^\dagger.
\end{equation}
Taking spectral norms and using
$\|Y^\dagger\|_2=\sigma_{\min}(Y)^{-1}$ gives the result.
\end{proof}

This formulation makes the role of the lifting explicit. Increasing $p$ or
$r$ can reduce approximation error while simultaneously worsening
$\sigma_{\min}(Y)$. A richer representation is therefore useful only if the
available empirical probability trajectories sufficiently excite its lifted
coordinates.

\section{\label{sec:spectral}Pseudospectral robustness for square stochastic representations}

The broader lifting viewpoint requires care with spectral terminology.
For $p>1$ or $r>0$, the identified readout
\begin{equation}
\widehat W_{p,r}:
\mathbb R^{N_{p,r}}
\rightarrow
\mathbb R^n
\end{equation}
is generally rectangular, while the closed lifted dynamics
$\mathcal F_{p,r}$ is nonlinear. Eigenvalue and pseudospectral diagnostics
should therefore not be applied directly to $\widehat W_{p,r}$ as though it
were a square linear evolution matrix.

For a square matrix $A$, the $\epsilon$-pseudospectrum is defined as
\begin{equation}
\Lambda_{\epsilon}(A)
=
\left\{
z\in\mathbb C:
\sigma_{\min}(zI-A)\le\epsilon
\right\},
\label{eq:pseudospectrum}
\end{equation}
equivalently the set of eigenvalues of matrices $A+E$ with
$\|E\|_2\le\epsilon$; see Ref.~\onlinecite{trefethen2005spectra}.
For a simple eigenvalue $\lambda_i$, with left and right eigenvectors
$w_i$ and $v_i$ normalized by $w_i^\ast v_i=1$, the first-order
sensitivity scale is
\begin{equation}
|\delta\lambda_i|
\lesssim
\|w_i\|_2\|v_i\|_2\,\epsilon.
\label{eq:eigen-sensitivity}
\end{equation}

Two distinct uses are relevant here. First, when the matrix uncertainty
scale is estimated from the identification problem, for example by
$\epsilon=\epsilon_{\rm mat}$, the pseudospectrum describes spectral
uncertainty induced by finite-data matrix error. Second, at a common fixed
$\epsilon$, pseudospectra can compare the robustness of two alternative
square representations. This second use is diagnostic of representation
fragility, not a proof of missing information.

To make this distinction explicit, define the pseudospectral radius
\begin{equation}
r_\epsilon(A)
=
\sup\{|z|:z\in\Lambda_\epsilon(A)\}.
\end{equation}
For a normal matrix,
\begin{equation}
\Lambda_\epsilon(A)
=
\bigcup_{\lambda\in\sigma(A)}
\overline B(\lambda,\epsilon),
\end{equation}
so no excess inflation occurs beyond the $\epsilon$-neighborhood of the
spectrum. A strongly non-normal representation may display much larger
resolvent growth and transient sensitivity even when its eigenvalues appear
benign.

Accordingly, the logic used in this work is
\begin{equation}
\boxed{
\begin{aligned}
\text{closure diagnosis}
&\to
\text{representation/enrichment test}\\
&\to
\text{pseudospectral robustness}.
\end{aligned}
}
\label{eq:pseudo-logic}
\end{equation}
A large pseudospectrum does not imply a closure obstruction, and a
non-closed representation need not always have a dramatic pseudospectrum.

Square stochastic models arise naturally for order-one probability
dynamics and for explicitly enriched Markov states. For linear delay models,
a square companion representation can also be formed on the stacked delay
state. For nonlinear lifted dynamics, the corresponding local objects are
Jacobians such as
\begin{equation}
D\mathcal F_p(q)
\quad\text{or}\quad
D\mathcal F_{p,r}(q),
\end{equation}
and products of such Jacobians along trajectories. A full cocycle-level
analysis is beyond the scope of this paper.

\section{\label{sec:example}Illustrative examples}

The examples are organized around a common question: which observational
fibers are dynamically sufficient, and which representation change repairs
the detected defect?

\subsection{Route localization and hidden transition commitment}

Consider the route graph
\begin{equation}
O\to\{A,B\},
\qquad
A\to\{C,E\},
\qquad
B\to E,
\qquad
\{C,E\}\to D.
\end{equation}
A coarse state records only the associated vertex. A fine state also
distinguishes pending transitions such as
$\mathrm{pending}(A,C)$ and $\mathrm{pending}(A,E)$. Thus
\begin{equation}
He_{\mathrm{node}(A)}
=
He_{\mathrm{pending}(A,C)},
\end{equation}
while their projected futures differ. Enriching by transition commitment
restores closure in the synthetic fine model. This is the prototypical
\emph{transition-progress enrichment}.

\begin{figure*}[t]
\centering
\includegraphics[width=0.96\textwidth]{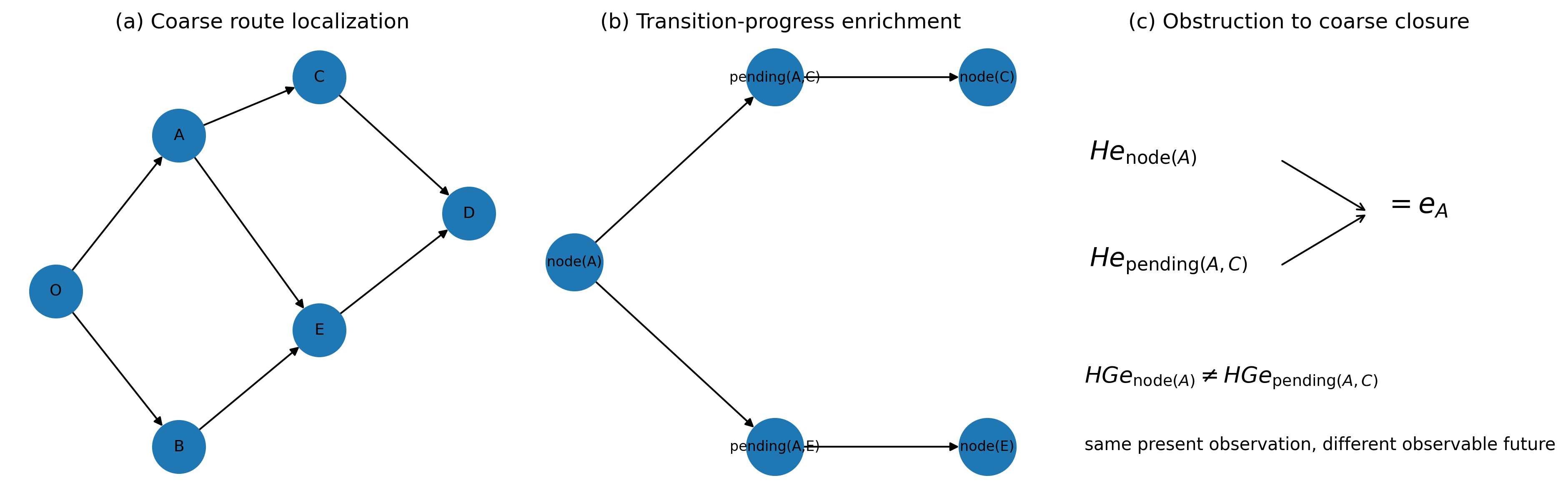}
\caption{\label{fig:route-enrichment}
Dynamically informed refinement of an observational fiber in the route toy
problem. (a) The coarse graph records only route vertices. (b) The enriched
state space separates transition commitment through pending-edge states.
(c) States can share the same present coarse observation while having
different projected futures, which obstructs a globally closed
instantaneous coarse map.}
\end{figure*}

\subsection{Rotational dynamics: phase, memory, and temporal scale}

For
\begin{equation}
x_{t+1}=A(\theta)x_t,
\qquad
A(\theta)=
\begin{bmatrix}
\cos\theta&-\sin\theta\\
\sin\theta&\cos\theta
\end{bmatrix},
\end{equation}
the scalar observation $x_1(t)$ identifies states
$(x_1,+x_2)$ and $(x_1,-x_2)$ that generally have different futures.
Mechanistic enrichment restores the missing phase coordinate, while one
delay is sufficient for the exact scalar recurrence
\begin{equation}
x_1(t+1)=2\cos\theta\,x_1(t)-x_1(t-1).
\end{equation}

A finite phase partition gives an even closer probability-localization
example. Coarse labels may merge different fine phases, producing
\begin{equation}
He_i=He_j,
\qquad
HP_Ne_i\neq HP_Ne_j.
\end{equation}
Finite histories can separate the phases for irregular partitions, whereas
persistent symmetries can remain ambiguous for every delay. This shows that
memory can reconstruct hidden state only when accumulated observations
separate the relevant fibers.

A random clock
\begin{equation}
W_\eta=(1-\eta)I+\eta P_N
\end{equation}
creates geometric dwell times and makes temporal coarse-graining explicit.
The obstruction magnitude is scaled by $\eta$ but is not removed by slower
dynamics.

The phase-cycle model also provides an exact pseudospectral calibration.
The fine permutation $P_N$ is unitary and therefore normal, so its
$\epsilon$-pseudospectrum is exactly the union of $\epsilon$-disks around
the $N$th roots of unity. The random-clock matrix
$W_\eta=(1-\eta)I+\eta P_N$ is normal as well. Hence residence time by
itself does not imply pseudo\-spectral fragility. By contrast, a square
coarse Markov model fitted after several phases are merged is forced to
represent a non-closed quotient by a single transition matrix. Its
pseudospectrum is used only as a complementary robustness diagnostic.

\begin{figure*}[t]
\centering
\includegraphics[width=0.96\textwidth]{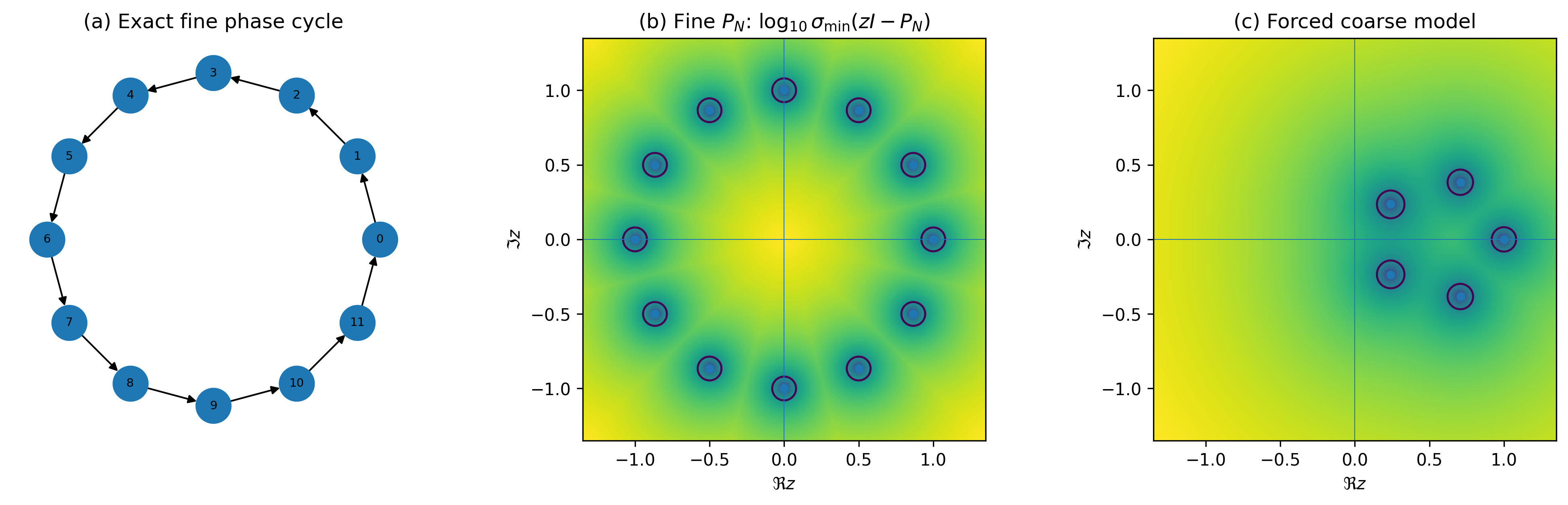}
\caption{\label{fig:rotation-pseudo}
Rotational calibration. (a) Exact fine phase cycle. (b) Resolvent diagnostic
for the normal cyclic permutation $P_N$; the contour marks a fixed
$\epsilon$-pseudospectral boundary and the points are eigenvalues.
(c) The corresponding forced coarse Markov representation after phase
aggregation. The comparison illustrates why pseudospectra are informative
about representation robustness but are not, by themselves, closure tests.}
\end{figure*}

\subsection{Chaotic logistic dynamics: closure, invariant law, and relaxation robustness}

Consider the fully chaotic logistic map~\cite{may1976,lasota1994}
\begin{equation}
x_{t+1}=f(x_t)=4x_t(1-x_t),
\qquad
x_t\in[0,1].
\label{eq:logistic-map}
\end{equation}
The scalar state is exactly first-order deterministic, but a finite localization
need not be. If $H_m$ denotes a cell map, then points can satisfy
\begin{equation}
H_m(x_1)=H_m(x_2),
\qquad
H_m(f(x_1))\neq H_m(f(x_2)),
\label{eq:logistic-obstruction}
\end{equation}
so apparent memory may be induced entirely by the observation. This separates
intrinsic chaotic complexity from representation-induced non-closure.

Probability trajectories were generated from ensembles of nonstationary initial
laws and identified with graph-constrained column-stochastic SSRC matrices.
For uniform partitions, the one-step total-variation error decreases from
approximately $0.0229$ at $m=4$ to $0.0158$ at $m=16$, then rises again to
approximately $0.0193$ at $m=32$. Meanwhile, the smallest nonzero singular
value of the empirical design decreases by nearly two orders of magnitude.
Thus finer localization does not produce a monotone improvement in
identifiability.

At fixed $m=12$, observable delay improves one-step prediction from
\begin{equation}
E_{\rm TV}(0)\approx0.01592
\end{equation}
to a shallow minimum near
\begin{equation}
E_{\rm TV}(4)\approx0.01129.
\end{equation}
A high-accuracy re-estimation gives
$E_{\rm TV}(3)\approx0.011411$,
$E_{\rm TV}(4)\approx0.011286$,
$E_{\rm TV}(5)\approx0.011347$, and
$E_{\rm TV}(6)\approx0.011539$. Over the same range,
$\sigma_{\min}^{+}$ decreases from approximately $0.1473$ to $0.00280$.
An instantaneous quadratic stochastic lifting also improves one-step
prediction, but its empirical-span conditioning is substantially poorer.
The chaotic example therefore exhibits the same basic distinction as the
general theory: nonlinear richness and observable history can both reduce
approximation error, but through different mechanisms and with different
identifiability costs.

\subsubsection{Invariant law as a representation diagnostic}

For $f(x)=4x(1-x)$, the invariant density is known explicitly~\cite{lasota1994},
\begin{equation}
\rho_\infty(x)
=
\frac{1}{\pi\sqrt{x(1-x)}},
\label{eq:logistic-invariant-density}
\end{equation}
with cumulative distribution
\begin{equation}
F_\infty(x)
=
\frac{2}{\pi}\arcsin\sqrt{x}.
\end{equation}
This gives exact invariant cell masses
\begin{equation}
p_j^\star
=
\int_{S_j}\rho_\infty(x)\,dx,
\end{equation}
providing an analytic benchmark that is independent of short-horizon
prediction.

The invariant law can also be used to design the partition itself. Equal
invariant-mass cells have boundaries
\begin{equation}
b_j
=
\sin^2\left(
\frac{\pi j}{2m}
\right),
\qquad
j=0,\ldots,m,
\label{eq:invariant-mass-partition}
\end{equation}
so that $p_j^\star=1/m$. Uniform and invariant-mass partitions do not rank
identically under validation error, stationary-law error, and conditioning.
This motivates distinguishing
\begin{equation}
\boxed{
\text{predictive fidelity}
\qquad\text{from}\qquad
\text{ergodic fidelity}.
}
\label{eq:predictive-ergodic}
\end{equation}

A useful stationary finite-state reference is
\begin{equation}
W^\star_{ij}
=
\frac{
\displaystyle
\int_{S_j}
\mathbf 1_{S_i}(f(x))\rho_\infty(x)\,dx
}{
\displaystyle
\int_{S_j}
\rho_\infty(x)\,dx
},
\label{eq:stationary-reference}
\end{equation}
for which $W^\star p^\star=p^\star$. This matrix is an
invariant-measure-conditioned coarse reference, not an exact closure operator
for arbitrary nonstationary laws. Accordingly,
$\|\widehat W-W^\star\|_2$ measures discrepancy from a stationary reference
rather than pure statistical identification error.

\begin{figure*}[t]
\centering
\includegraphics[width=0.96\textwidth]{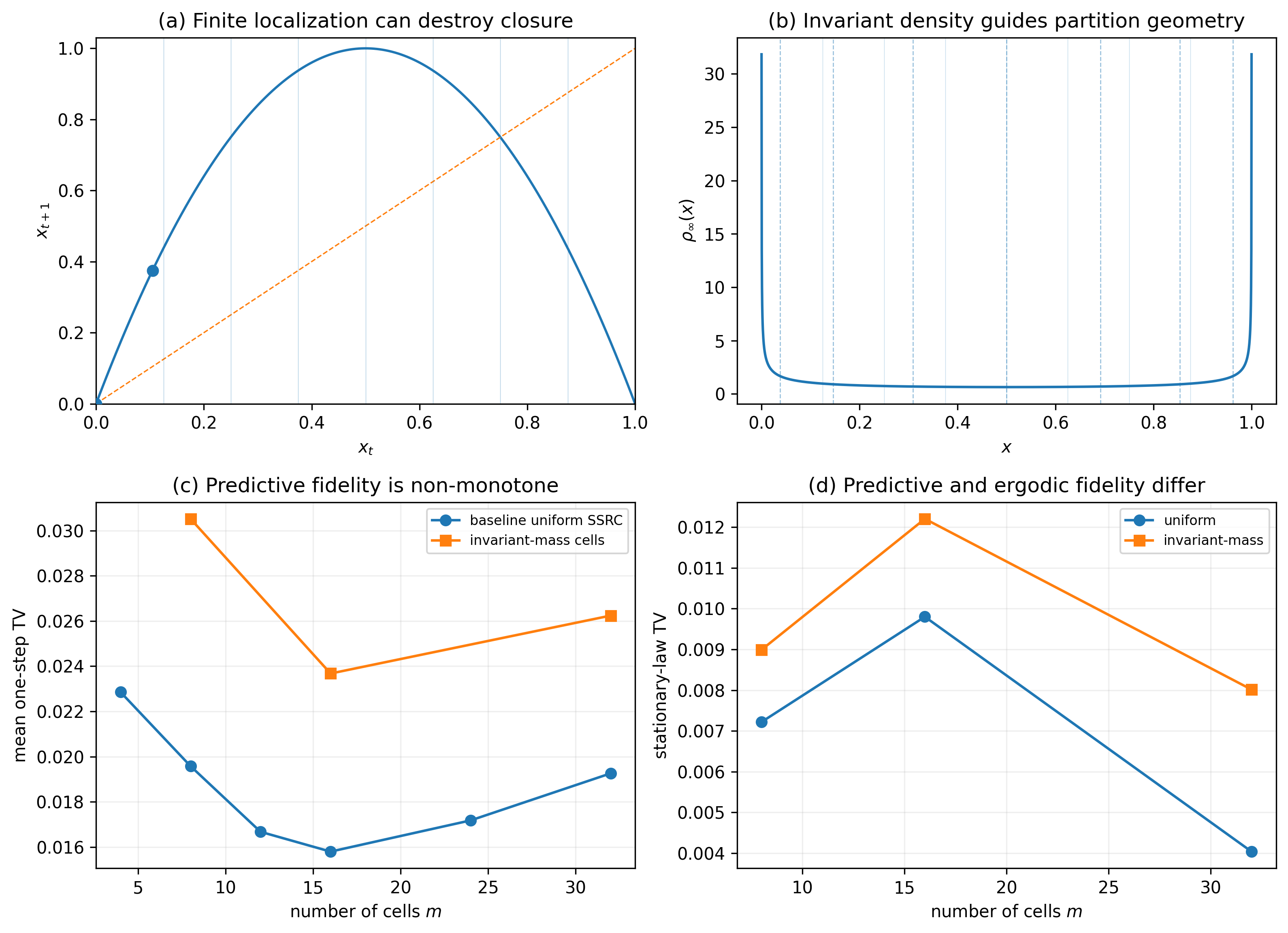}
\caption{\label{fig:logistic-invariant}
Chaotic logistic-map localization and invariant-law diagnostics.
(a) Finite cells can merge points with different next-cell images.
(b) The exact invariant density suggests a nonuniform equal-mass partition.
(c) Short-horizon predictive error is non-monotone in spatial resolution and
depends on partition geometry.
(d) Stationary-law fidelity need not rank representations in the same order as
one-step prediction.}
\end{figure*}

\subsubsection{Probability-relaxation pseudospectra}

For a column-stochastic matrix $W$, the zero-mass subspace
\begin{equation}
\mathcal Z
=
\left\{
v\in\mathbb R^m:
\mathbf 1^\top v=0
\right\}
\label{eq:zero-mass-subspace}
\end{equation}
is invariant. Since differences between probability vectors lie in
$\mathcal Z$, the nontrivial relaxation dynamics is more naturally studied
after removing the stationary mode. If the columns of
$Q\in\mathbb R^{m\times(m-1)}$ form an orthonormal basis of $\mathcal Z$, define
\begin{equation}
A
=
Q^\top WQ.
\label{eq:zero-sum-restriction}
\end{equation}
The spectrum of $A$ describes asymptotic relaxation, while
$\Lambda_\epsilon(A)$ and
\begin{equation}
G(k)
=
\|A^k\|_2
\label{eq:transient-growth}
\end{equation}
quantify perturbation sensitivity and transient probability amplification.

The logistic experiments reveal a substantial distinction between asymptotic
and transient behavior. For uniform partitions, the maximum transient
amplification increases from approximately $1.29$ at $m=8$ to $2.14$ at
$m=32$, despite spectral radii on the zero-mass subspace remaining below one.
Under equal invariant-mass partitions, the corresponding maxima remain close
to one over the same resolutions. Thus
\begin{equation}
\rho(A)<1
\quad\not\Rightarrow\quad
\|A^k\|_2\le1
\end{equation}
for all $k$, and the geometry of the finite representation can alter
transient robustness even when the underlying map is unchanged.

This comparison also clarifies the role of pseudospectra in the present
framework. Closure defect, non-normality, transient amplification, and
pseudospectral inflation are related but distinct. Pseudospectral contours are
therefore interpreted as robustness diagnostics of an already identified
square stochastic representation, not as direct certificates of closure.

\begin{figure*}[t]
\centering
\includegraphics[width=0.96\textwidth]{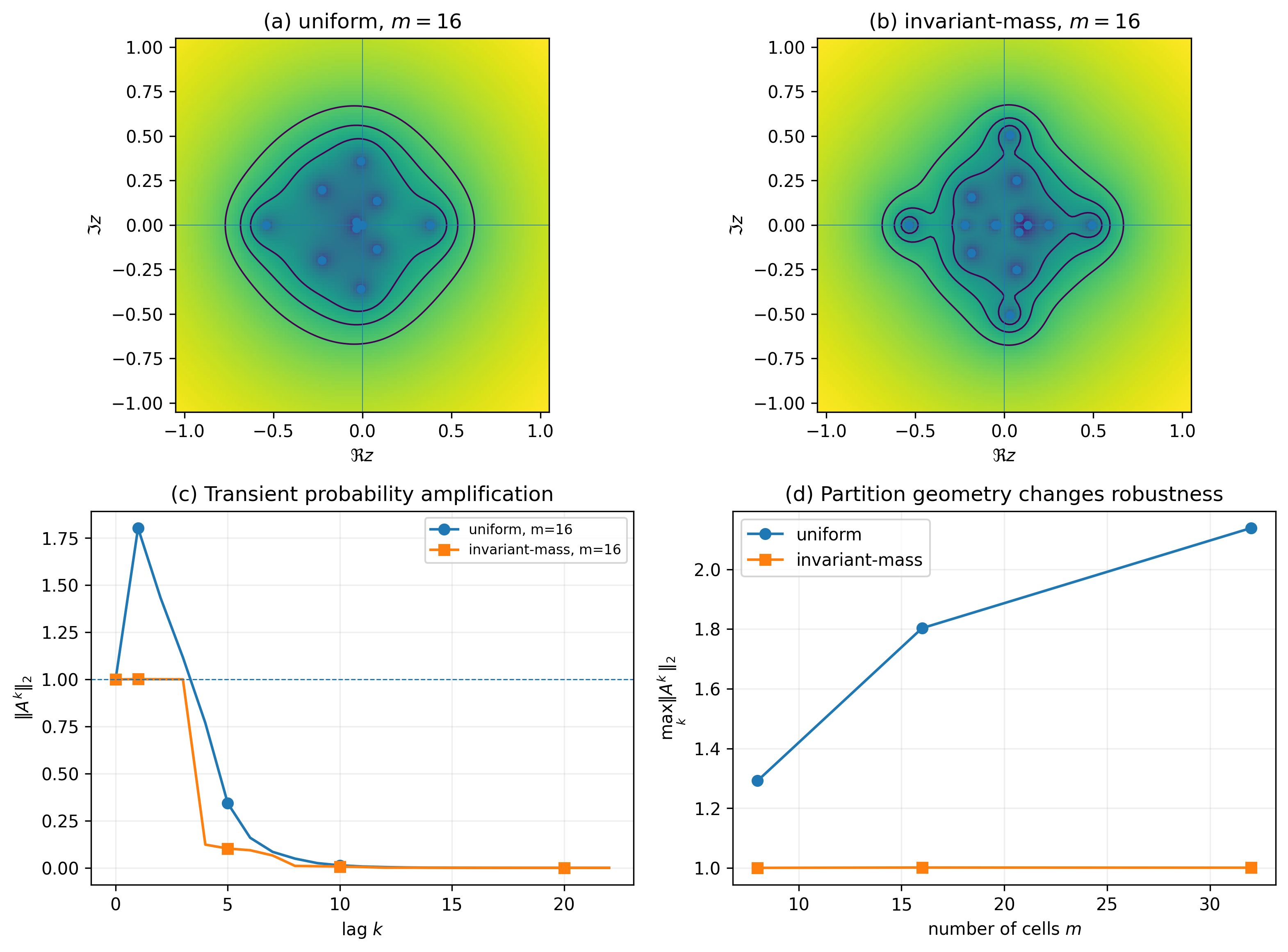}
\caption{\label{fig:logistic-pseudo}
Probability-relaxation robustness for the chaotic logistic map.
(a,b) Pseudospectral portraits of the zero-mass restriction $A=Q^\top WQ$
for uniform and invariant-mass partitions at $m=16$.
(c) Transient amplification $\|A^k\|_2$ after removing the stationary mode.
(d) The maximum amplification depends strongly on partition geometry:
uniform refinement increases transient growth, whereas equal invariant-mass
partitions remain close to contractive in the tested resolutions.}
\end{figure*}

\subsection{Van der Pol localization and adaptive spatial-temporal scale}

For the Van der Pol oscillator~\cite{vanderpol1926}
\begin{equation}
\dot x_1=x_2,
\qquad
\dot x_2=\mu(1-x_1^2)x_2-x_1,
\end{equation}
finite representative sets on the attracting cycle define empirical
Voronoi cells~\cite{okabe2000} and a relational graph. Varying the number of representatives
changes the empirical covering radius $\delta_m$, residence statistics, and
effective graph support.

The laboratory therefore searches jointly over $(m,\alpha)$. Candidate
models are compared by validation error, support complexity, and spatial
resolution. Rather than asserting a unique optimum, an admissible region
\begin{equation}
\mathcal R_\tau
=
\left\{
(m,\alpha):
E_{\rm val}(m,\alpha)
\le
(1+\tau)E_{\min}
\right\}
\label{eq:admissible-region}
\end{equation}
identifies statistically comparable representations.

At a fixed spatial-temporal representation, polynomial degree and delay
depth were compared. The quadratic model $(p,r)=(2,1)$ achieved lower
validation error than all tested linear models through $r=4$, while the
linear model at $r=5$ overtook it and the extended linear sweep continued to
improve. This supports the distinction between nonlinear representational
richness and genuinely informative history.

\begin{figure*}[t]
\centering
\includegraphics[width=0.96\textwidth]{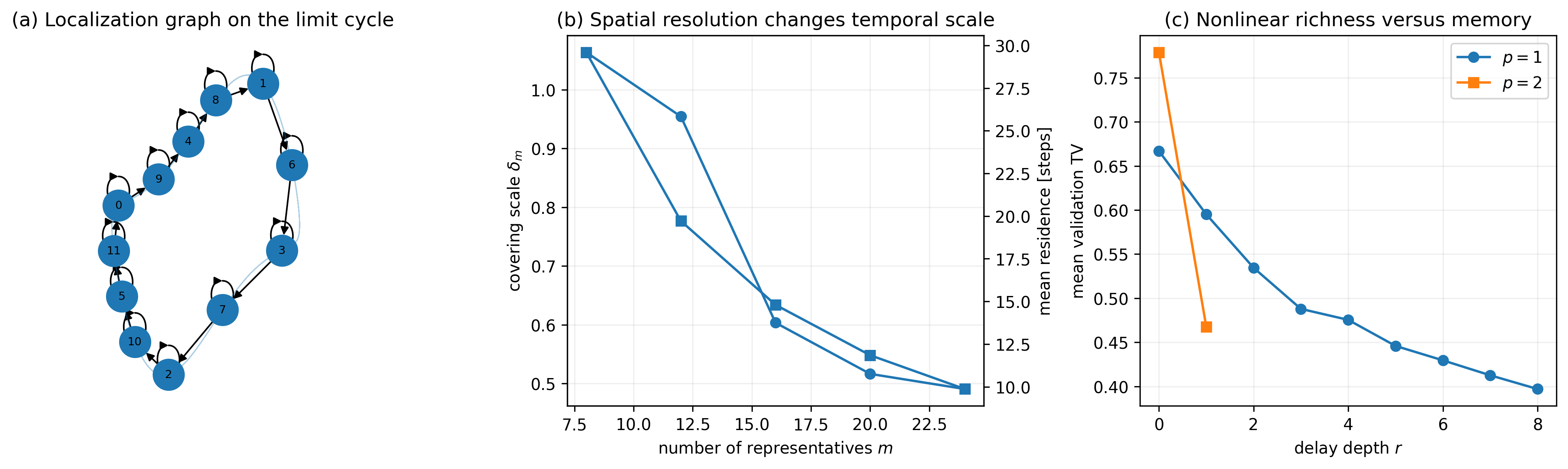}
\caption{\label{fig:vdp-tradeoffs}
Van der Pol localization and representation trade-offs. (a) A finite
relational graph placed on the attracting limit cycle. (b) Increasing the
number of representatives decreases the covering scale while also changing
residence time, coupling spatial and temporal resolution. (c) At the
selected spatial-temporal scale, a quadratic model with one delay is more
accurate than linear models through $r=4$, but sufficiently long linear
history eventually performs better.}
\end{figure*}

\subsection{Synthetic cyclic inventory: hidden age and approximate demand cycles}

The operational inventory states are
\begin{equation}
\mathrm{High}
\to
\mathrm{Normal}
\to
\mathrm{Low}
\to
\mathrm{Reorder}
\to
\mathrm{Transit}
\to
\mathrm{Restocked}.
\end{equation}
The fine model resolves transit progress as
\begin{equation}
T_1\to T_2\to\cdots\to T_L,
\end{equation}
whereas the coarse observation merges all $T_a$ into
$\mathrm{Transit}$. Hence
\begin{equation}
He_{T_1}=He_{T_L},
\qquad
HW_fe_{T_1}\neq HW_fe_{T_L}.
\end{equation}
The structured fine SSRC model recovers the synthetic transition matrix to
machine precision, while the coarse representation has a nonzero exact-data
residual.

Observable-memory models improve sharply when the delay horizon approaches
the physical lead-time scale. More importantly, the coarse observed paths
alone can trigger state enrichment. The empirical hazard
\begin{equation}
\widehat h_j(a)
=
\Pr(s_{t+1}\neq j\mid s_t=j,A_t=a)
\end{equation}
is tested against a constant-hazard null. In the synthetic experiment only
the transit state is flagged. It is then refined automatically into
age-indexed states without using the hidden labels $T_1,\ldots,T_L$.
Held-out transition prediction improves from log loss approximately
$0.552$ to $0.419$ and from Brier score approximately $0.375$ to $0.284$.
Thus the enrichment is detected statistically and accepted by predictive
validation. As a complementary robustness check, the age-enriched matrix
also has a smaller departure from normality,
$\|\What^\top\What-\What\What^\top\|_F$, than the coarse matrix
($1.25$ versus $1.89$), consistent with the more favorably conditioned
pseudospectrum shown in Fig.~\ref{fig:inventory-pseudo}.

\begin{figure*}[t]
\centering
\includegraphics[width=0.8\textwidth]{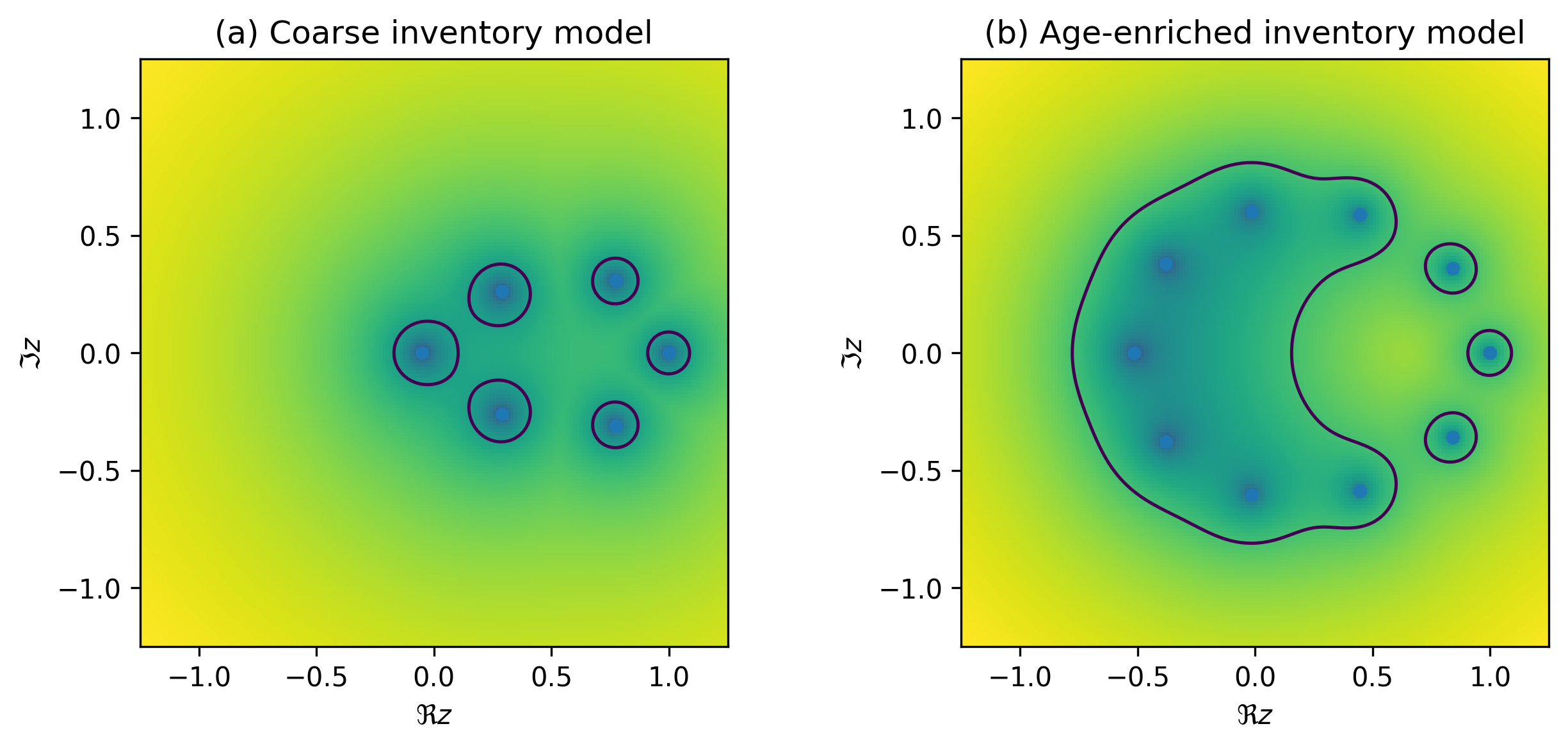}
\caption{\label{fig:inventory-pseudo}
Pseudospectral portraits of the inventory transition matrix before and
after age enrichment, at a common perturbation scale $\epsilon=0.08$.
(a) The coarse model, restricted to the operational states of
Section~\ref{sec:example}, has isolated near-unity eigenvalues surrounded
by tight resolvent contours. (b) The age-enriched model has a visibly
larger and more merged $\epsilon$-pseudospectral region around its
dominant cluster, together with a smaller departure from normality overall,
consistent with the improved held-out predictive closure reported above.
This is a second instance of the same diagnostic role played by
Fig.~\ref{fig:rotation-pseudo}: pseudospectra are used here as a
robustness comparison between two already-identified square
representations, not as a certificate of closure.}
\end{figure*}

\begin{figure*}[t]
\centering
\includegraphics[width=0.96\textwidth]{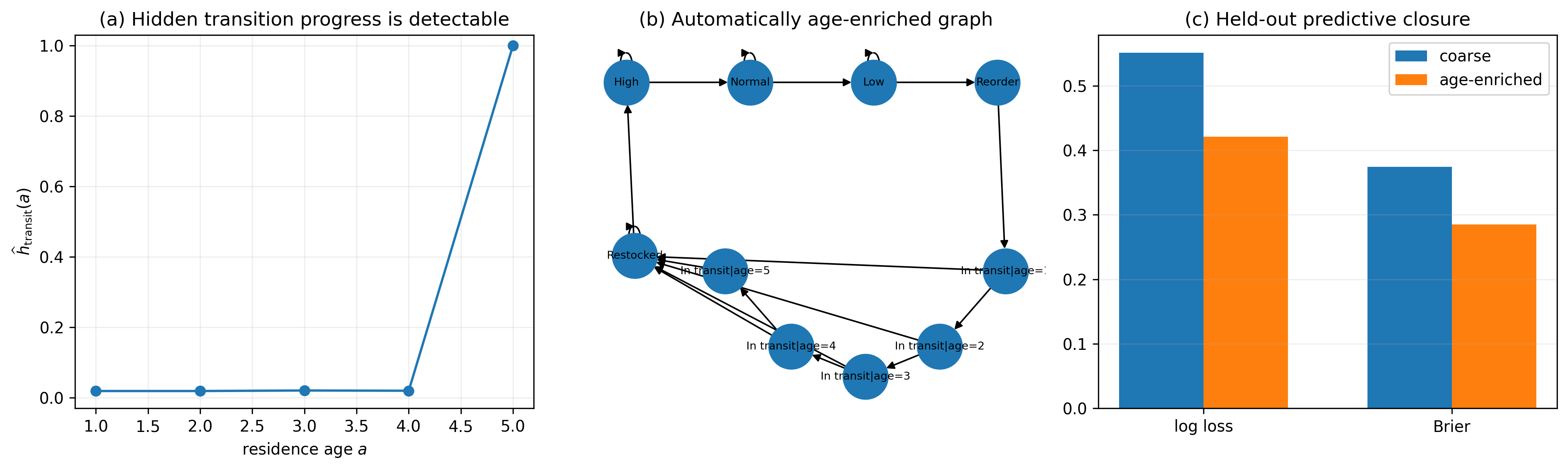}
\caption{\label{fig:inventory-enrichment}
Algorithmic mechanistic enrichment in the synthetic inventory cycle.
(a) The exit hazard from the coarse transit state depends strongly on
elapsed residence age. (b) The flagged state is refined automatically into
age-indexed states using only coarse paths. (c) The enriched representation
improves held-out log loss and Brier score.}
\end{figure*}

Finally, an approximately cyclic demand signal
\begin{equation}
u_t
=
\bar u
+
a_1\sin(2\pi t/T_1+\phi_1)
+
a_2\sin(2\pi t/T_2+\phi_2)
\end{equation}
modulates stock-depletion probabilities. If $u_t$ is omitted, delay models
must infer part of its phase indirectly from inventory history. Appending
the observed forcing and using a quadratic stochastic lifting improves
recursive prediction relative to the hidden-demand delay sweep. This gives a
controlled example of
\begin{equation}
\text{omitted exogenous state}
\quad\Longrightarrow\quad
\text{apparent memory}.
\end{equation}
Component-wise residual dependence on $u_t$ provides an algorithmic trigger
for testing exogenous-state enrichment.

\begin{figure}[t]
\centering
\includegraphics[width=\columnwidth]{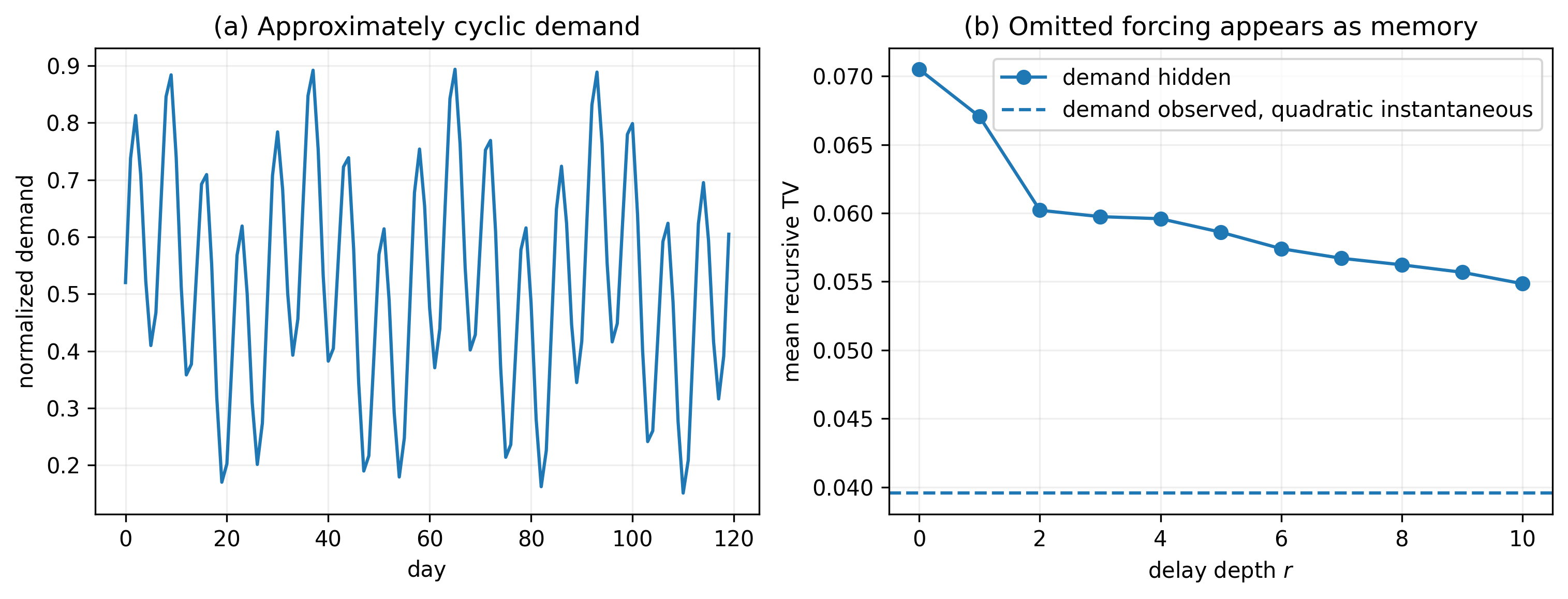}
\caption{\label{fig:demand-memory}
Approximate demand cycles and apparent memory. The exogenous forcing contains
a weekly and a slower component. When demand is omitted, delay depth partly
reconstructs its phase from inventory history; supplying the observed
forcing directly lowers recursive prediction error without requiring a
long delay.}
\end{figure}

\subsection{Common structure of the examples}

The route pending edge, rotational phase, within-cell logistic position,
fine Van der Pol localization, inventory transit age, and demand phase are
different physical or operational objects, but they expose the same
representational question: which distinctions are required for prediction?
The logistic example is especially useful because the original scalar state
is already Markovian; the apparent memory is introduced by finite
localization itself. Delay lifting provides a reconstructive alternative
whenever finite histories separate the unresolved distinctions.

\section{\label{sec:selection}Adaptive representation selection}

The numerical experiments indicate that representation complexity is organized
along interacting axes:
\begin{equation}
\boxed{
\delta,\qquad
\alpha,\qquad
p,\qquad
r,
}
\label{eq:adaptive-axes}
\end{equation}
together with graph support and optional state-space enrichment.

Here $\mathcal P$ controls spatial localization, including both scale
$\delta$ and partition geometry; $\alpha$ controls temporal sampling,
$p$ instantaneous nonlinear richness, and $r$ observable memory depth. State
enrichment changes the informational content of the representation and
therefore cannot be reduced to tuning $p$ or $r$.

\subsection{Minimal adequate representations}

Let $\mathcal R$ denote a candidate representation and let
$E_{\rm val}(\mathcal R)$ be an out-of-sample predictive error. A natural
goal is
\begin{equation}
\min_{\mathcal R}
\mathcal C(\mathcal R)
\quad\text{subject to}\quad
E_{\rm val}(\mathcal R)\le\epsilon_{\rm target},
\label{eq:min-adequate}
\end{equation}
with additional graph, conditioning, or measurement constraints. This
defines a \emph{minimal adequate representation}: not the finest available
model, but the least complex one that achieves the required closure.

When no single tolerance is preferred, a Pareto frontier can be formed from
validation error and complexity measures such as
\begin{equation}
\operatorname{nnz}(\widehat W),
\qquad
\dim(\mathcal R),
\qquad
\delta,
\end{equation}
or measurement cost. For square candidate representations, one may also
include a pseudospectral robustness constraint or Pareto coordinate, for
example $r_\epsilon(A)$ at a common perturbation scale. This criterion is
applied only after closure and representation adequacy have been assessed.
An admissible region such as Eq.~\eqref{eq:admissible-region} is often more
defensible than a unique minimizer.

\subsection{Conditioning and empirical span}

For lifted data matrix $Y_{\mathcal R}$, residual reduction alone is
insufficient. Simplex and delay coordinates can be structurally rank
deficient, so conditioning should be measured on the empirical span using
the smallest nonzero singular value,
\begin{equation}
\sigma_{\min}^{+}(Y_{\mathcal R}).
\end{equation}
A richer representation can improve predictive closure while worsening
conditioning or sample complexity. This tradeoff is intrinsic.

\subsection{Closure-driven greedy enrichment}

A practical adaptive procedure is:
\begin{enumerate}
\item fit the structured stochastic model on the current representation;
\item evaluate held-out error and residual structure;
\item generate candidate changes suggested by the diagnostics:
spatial refinement, temporal resampling, small delays, polynomial lifting,
residence-age refinement, or exogenous variables;
\item validate each candidate under the same predictive criterion;
\item accept the candidate with sufficient gain relative to complexity and
measurement cost;
\item stop when the target error is met or no candidate gives a meaningful
improvement.
\end{enumerate}

This procedure is intentionally conservative. A periodic residual can trigger
a candidate phase variable; age-dependent hazard can trigger residence
refinement; residual correlation with $u_t$ can trigger exogenous
enrichment; and a residual localized in a subset of cells can trigger local
spatial refinement. The algorithm proposes predictive refinements, while
domain knowledge determines whether they have a credible mechanistic
interpretation.

\section{\label{sec:discussion}Discussion}

The resulting framework may be viewed as a theory of finite stochastic
representations whose geometry, clock, memory, and information content are
adapted to the observable dynamics.

The first structural distinction is between \emph{representation} and
\emph{information}. An SSRC lifting with active linear block satisfies
\begin{equation}
\kappa_p\circ\eth_p=\operatorname{id},
\end{equation}
so it enriches coordinates without discarding or adding observable
information. Delay lifting adds information from observable history.
State-space enrichment refines an information-losing observation map.

This can be summarized by
\begin{equation}
\widetilde{\mathcal X}
\overset{\widetilde H}{\longrightarrow}
\mathcal Z
\overset{\Pi}{\longrightarrow}
\mathcal X
\overset{\eth_p}{\longrightarrow}
\mathcal M_p,
\end{equation}
where $H=\Pi\circ\widetilde H$. The refinement
$\widetilde H$ aims to make future observables approximately constant on its
fibers; the stochastic lifting $\eth_p$ then provides a structured
representation for identification.

The examples reveal several recurring mechanisms. Route commitment and
inventory transit age are hidden transition progress. Rotational phase and
spatial localization are geometric state distinctions. The logistic map
shows that finite localization can itself manufacture an apparent memory
requirement even when the underlying state is exactly Markovian. Demand
phase is an exogenous distinction. Across these settings, the central
failure is the same: states or probability configurations merged by the
current representation can have materially different observable futures.

A useful implication follows for memory. A selected delay depth need not
indicate intrinsic non-Markovianity. It can arise because a mechanistic
state or exogenous forcing has been omitted, or because finite localization
has discarded within-cell information that is partially reconstructible from
history. Adding the right state variable, refining the localization, or
retaining a short observable history can therefore reduce apparent memory.

\subsection{A universality question}

Let
\begin{equation}
F:
\mathcal K
\subset
(\Delta^{d-1})^{r+1}
\to
\Delta^{d-1}
\end{equation}
be a continuous probability update on a compact family of admissible
histories. Under what conditions do there exist $p$, $r$, a stochastic
lifting $\Epr$, and a structured column-stochastic matrix $W_{p,r}$ such
that
\begin{equation}
\sup_{\mathbf p\in\mathcal K}
\left\|
F(\mathbf p)-W_{p,r}\Epr(\mathbf p)
\right\|
<\epsilon?
\label{eq:universality-question}
\end{equation}
No such universality theorem is claimed here. The question is constrained
by simplex geometry, nonnegativity, graph support, and possibly incomplete
information.

A second approximation problem concerns enrichment itself: given a coarse
observation $H$, can one construct a low-complexity refinement
$\widetilde H$ whose fibers are $K$-step predictively sufficient to within a
prescribed tolerance? This is a finite predictive-quotient problem rather
than full state reconstruction.

\subsection{Open directions}

Several directions follow directly.

First, hazard-driven refinement should be generalized beyond residence age
to other local statistics of hidden progress. Second, exogenous enrichment
should be extended to controlled or switching structured stochastic models.
Third, local temporal scales $\alpha(\delta,j)$ may be useful when residence
times vary strongly by region. Fourth, conditioning and sample complexity
should be incorporated explicitly into adaptive selection. Fifth, spectral
and pseudospectral analysis for nonlinear lifted dynamics should proceed
through Jacobians, cocycles, or related local objects, not through the
rectangular readout matrix.

Finally, the common structure of the examples suggests a broader viewpoint:
probability localization models are not only reduced models of dynamics.
They are adaptive quotients of the underlying state, refined until their
observable fibers become sufficiently predictive for the task at hand.
The quotient/refinement language already exposes a topological layer, but
more specific questions about preserved cycles, homology, or topological
obstructions are deliberately left for separate work.

\begin{acknowledgments}
The author acknowledges the Department of Applied Mathematics, School of
Mathematics and Computer Science, UNAH, for institutional support. The
author also thanks Ruth Moreno for insightful conversations that motivated
the worked route-network example.
\end{acknowledgments}

\section*{Data Availability Statement}

The numerical examples in this work are synthetic and are generated by the
algorithms described in the manuscript and accompanying computational
notebooks. No external empirical data are required to reproduce the reported
experiments. The code and computational notebooks that support the findings
and experiments reported in this manuscript will be made available, in due
time, in the ProSpectLifter GitHub
repository~\cite{vides2026prospectlifter}.

\appendix

\section{\label{app:reductions}Technical details}

\subsection{Reduction of repeated monomials}

For a tensor block $x^{\otimes k}$, associate each coordinate
$x_{i_1}\cdots x_{i_k}$ with the exponent multiindex
\begin{equation}
\alpha_j
=
\#\{
\ell:i_\ell=j
\}.
\end{equation}
Two tensor coordinates represent the same commutative monomial if and only
if their exponent multiindices coincide. The reduction matrix $R$ contains
one row per distinct multiindex and sums all tensor coordinates belonging to
that class. Since only coordinates are aggregated and no mass is removed,
the reduced vector remains stochastic.

For a base dimension $m$ and quadratic degree, the unreduced linear-plus-
quadratic embedding has dimension
\begin{equation}
m+m^2,
\end{equation}
whereas the reduced dimension is
\begin{equation}
m+\binom{m+1}{2}.
\end{equation}
For the delay-two route example, $m=18$, so the dimension decreases from
\begin{equation}
342
\end{equation}
to
\begin{equation}
189.
\end{equation}

\subsection{Left inverse after reduction}

Let $\Pi_1$ select the reduced first-order coordinates. If the linear block
has coefficient $\alpha_1>0$, then
\begin{equation}
\Pi_1\eth_p(x)
=
\alpha_1x
\end{equation}
and
\begin{equation}
\kappa_p(q)
=
\alpha_1^{-1}\Pi_1q.
\end{equation}

For the delay embedding of Eq.~\eqref{eq:delay-block}, the reduced linear
coordinates contain
\begin{equation}
\alpha_1v_0p_t,
\ldots,
\alpha_1v_rp_{t-r}.
\end{equation}
If every $v_\ell>0$, blockwise rescaling reconstructs the complete history.

\subsection{Residual-to-matrix bound}

Equation~\eqref{eq:matrix-bound} follows directly from
\begin{equation}
(\What-\Wtrue)Y
=
(\What Y-Y')+E
\end{equation}
and right multiplication by the Moore--Penrose inverse $Y^\dagger$ when $Y$
has full row rank.

\section*{References}

\bibliographystyle{aipnum4-1}
\bibliography{paper/probability_localization_dynamics}

\end{document}